\date{12 July 2009}
\title{Derived Equivalences Between Associative Deformations}
\author{Amnon Yekutieli}
\address{Department of  Mathematics,
Ben Gurion University,
Be'er Sheva 84105,
ISRAEL}
\email{amyekut@math.bgu.ac.il}
\thanks{{\em Mathematics Subject Classification} 2000.
Primary: 53D55; Secondary: 18E30, 16S80, 16D90.}
\keywords{Morita theory, tilting complexes, deformation quantization.}
\thanks{This research was supported by the Israel Science Foundation.}
\newtheorem{thm}[equation]{Theorem}
\newtheorem{cor}[equation]{Corollary}
\newtheorem{prop}[equation]{Proposition}
\newtheorem{lem}[equation]{Lemma}
\theoremstyle{definition}
\newtheorem{dfn}[equation]{Definition}
\newtheorem{rem}[equation]{Remark}
\newtheorem{exa}[equation]{Example}
\newtheorem{setup}[equation]{Setup}
\numberwithin{equation}{section}
\newcommand{\xar}{\xrightarrow}
\newcommand{\opn}{\operatorname}
\newcommand{\cat}[1]{\operatorname{\mathsf{#1}}}
\newcommand{\mfrak}[1]{\mathfrak{#1}}
\newcommand{\mrm}[1]{\mathrm{#1}}
\newcommand{\mbb}[1]{\mathbb{#1}}
\newcommand{\tup}[1]{\textup{#1}}
\newcommand{\til}[1]{\tilde{#1}}
\newcommand{\what}[1]{\hat{#1}}
\newcommand{\hatotimes}[1]{\, \what{\otimes}_{#1} \,}
\renewcommand{\k}{\Bbbk}
\newcommand{\K}{\mbb{K}}
\newcommand{\R}{\mbb{R}}
\newcommand{\Z}{\mbb{Z}}
\newcommand{\p}{\mfrak{p}}
\newcommand{\q}{\mfrak{q}}
\newcommand{\m}{\mfrak{m}}
\renewcommand{\a}{\mfrak{a}}
\renewcommand{\d}{\mrm{d}}
\begin{document}

\begin{abstract}
We prove that if two associative deformations (parameterized by
the same complete local ring) are derived Morita equivalent, then they are
Morita equivalent (in the classical sense).
\end{abstract}

\maketitle

\setcounter{section}{-1}
\section{Introduction}

Let $\K$ be a commutative ring, and let $A$ and $B$ be associative unital
$\K$-algebras. We denote by $\cat{Mod} A$ and $\cat{Mod} B$ the corresponding
categories of left modules. One says that $A$ and $B$ are {\em Morita equivalent
relative to $\K$} (in the classical sense) if there is a $\K$-linear equivalence
of categories
$\cat{Mod} A \to \cat{Mod} B$.

Let $\cat{D}^{\mrm{b}}(\cat{Mod} A)$ denote the bounded derived category of 
complexes of left $A$-modules. This is a $\K$-linear triangulated category.
If there is a $\K$-linear equivalence of triangulated categories
$\cat{D}^{\mrm{b}}(\cat{Mod} A) \to \cat{D}^{\mrm{b}}(\cat{Mod} B)$,
then one says that $A$ and $B$ are {\em derived Morita equivalent
relative to $\K$}.

There are plenty of examples of pairs of algebras that are derived Morita
equivalent, but are not Morita equivalent in the classical sense.

Now suppose $\K$ is a complete noetherian local ring, with maximal
ideal $\m$ and residue field $\k$.  
Let $A$ be a flat $\m$-adically complete $\K$-algebra, such that the
$\k$-algebra $\bar{A} := \k \otimes_\K A$ is commutative. We then say that $A$
is an {\em associative $\K$-deformation of $\bar{A}$}; see \cite{Ye3}.

The most important example of an associative deformation is when 
$\k = \R$; $\bar{A} = \mrm{C}^{\infty}(X)$, the $\R$-algebra of smooth functions
on a differentiable manifold $X$; $\K = \R[[\hbar]]$, the ring of formal power
series in the variable $\hbar$; 
and $A = \bar{A}[[\hbar]]$. In this case the multiplication in $A$ is called a
{\em star product}.

Let us assume that $A$ and $B$ are associative $\K$-deformations, and moreover
the commutative rings $\bar{A}$ and $\bar{B}$ have connected prime spectra
(i.e.\ they have no nontrivial idempotents). 
The main result of the paper (Theorem \ref{thm:5}) says that if
$T$ is a {\em two-sided tilting complex over $B$-$A$ relative to $\K$}, then 
$T \cong P[n]$ for some invertible bimodule $P$ and integer $n$. 
(Tilting complexes and their properties are recalled in Section 1.)
A direct consequence (Corollary \ref{cor:2}) is that if $A$ and
$B$ are derived Morita equivalent, then they are Morita equivalent in the
classical sense.

\medskip \noindent
{\em Acknowledgments.}
The problem was brought to my attention by H. Bursztyn and S. Waldmann
during the conference ``Algebraic Analysis and Deformation Quantization'' in
Scalea, Italy in June 2009. I wish to thank Bursztyn and Waldmann for explaining
their work to me, and the organizers of the conference for providing the 
background for this interaction.

\section{Base Change for Tilting Complexes}

In this section we recall some facts about two-sided tilting complexes, and
also prove one new theorem.
Throughout this section $\K$ is a commutative ring. 
By ``$\K$-algebra'' we mean an associative unital algebra; i.e.\ a ring $A$,
with center $\mrm{Z}(A)$, together with a ring homomorphism
$\K \to \mrm{Z}(A)$.

For a $\K$-algebra $A$ we denote by $A^{\mrm{op}}$ the opposite algebra, namely
with reverse multiplication. We view right $A$-modules as left
$A^{\mrm{op}}$-modules. Let $B$ be some other $\K$-algebra. 
By {\em $B$-$A$-bimodule relative to $\K$} we mean a 
$\K$-central $B$-$A$-bimodule. We view $B$-$A$-bimodules relative to $\K$
as left $B \otimes_{\K} A^{\mrm{op}}$ -modules.

The category of left $A$-modules is denoted by $\cat{Mod} A$.
This is a $\K$-linear abelian category.
Classical Morita theory says that any $\K$-linear equivalence
$\cat{Mod} A \to \cat{Mod} B$
is of the form $P \otimes_A -$, where $P$ is some invertible
$B$-$A$-bimodule relative to $\K$.

The derived category of $\cat{Mod} A$ is
$\cat{D}(\cat{Mod} A)$. 
This is a $\K$-linear triangulated category.
We follow the conventions of \cite{RD} on derived categories.
For instance, $\cat{D}^{\mrm{b}}(\cat{Mod} A)$ is the full subcategory of
$\cat{D}(\cat{Mod} A)$ consisting of bounded complexes.

Here is a definition from Rickard's paper \cite{Ri1}.

\begin{dfn}
Let $A$ and $B$ be $\K$-algebras. If there exists a $\K$-linear equivalence of
triangulated categories
$\cat{D}^{\mrm{b}}(\cat{Mod} A) \to \cat{D}^{\mrm{b}}(\cat{Mod} B)$
then we say that $A$ and $B$ are {\em derived Morita equivalent relative to
$\K$}.
\end{dfn}

Now assume that $A$ is {\em flat} over $\K$.
Since $A \otimes_{\K} B$ is flat over $B$, it follows
that the forgetful functor
$\cat{Mod} A \otimes_{\K} B \to \cat{Mod} B$
sends flat modules to flat modules. 

Given three $\K$-algebras $A, B, C$, and complexes
$M \in \cat{D}^-(\cat{Mod} A \otimes_{\K} B^{\mrm{op}})$
and
$N \in \cat{D}^-(\cat{Mod} B \otimes_{\K} C^{\mrm{op}})$,
and assuming $A$ is flat over $\K$,
the derived tensor product
\[ M \otimes^{\mrm{L}}_B N \in \cat{D}^-(\cat{Mod} A \otimes_{\K} C^{\mrm{op}})
\]
can be defined as follows: choose a quasi-isomorphism
$P \to M$ with $P$ a bounded above complex of projective 
$A \otimes_{\K} B^{\mrm{op}}$ -modules. Then $P$ is a bounded above complex of
flat $B^{\mrm{op}}$-modules, and we take
\[ M \otimes^{\mrm{L}}_B N  := P \otimes_B N . \]
This operation is functorial in $M$ and $N$.
As usual the requirements can be relaxed: it is enough to resolve $M$ by a
bounded above complex $P$ of
bimodules that are flat over $B^{\mrm{op}}$. If $C$ is flat over $\K$ then we
can resolve $N$ instead of $M$.
The derived tensor product 
$M \otimes^{\mrm{L}}_B N$
is ``indifferent'' to the algebras $A$ and $C$: we can forget them before or
after calculating $M \otimes^{\mrm{L}}_B N$, and get the same answer in
$\cat{D}^-(\cat{Mod} \K)$.

We record the following useful technical results.

\begin{lem}[Projective truncation trick] \label{lem:3}
Let $M \in \cat{D}(\cat{Mod} A)$ and let $i_0$ be an integer.
Suppose that $\mrm{H}^i M = 0$ for all $i > i_0$, and $P := \mrm{H}^{i_0} M$
is a projective $A$-module. Then there is an isomorphism
$M \cong P[-i_0] \oplus N$ in $\cat{D}(\cat{Mod} A)$, where 
$N$ is a complex satisfying $N^i = 0$ for all $i \geq i_0$.
\end{lem}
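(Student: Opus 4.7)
The plan is to reduce to the case where $M$ has no entries above degree $i_0$, and then exploit the projectivity of $P$ to split off the top cohomology.

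First I would replace $M$ by the canonical truncation $\tau^{\leq i_0}M$, which is quasi-isomorphic to $M$ because $\mrm{H}^i M = 0$ for $i > i_0$. We may therefore assume that $M^i = 0$ for all $i > i_0$. In this situation $d^{i_0}$ is automatically zero, so $P = M^{i_0}/d(M^{i_0-1})$ appears as a quotient of $M^{i_0}$, and the projection $M^{i_0} \surj P$ assembles into an honest chain map $\pi\colon M \to P[-i_0]$.

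Next, because $P$ is projective the surjection $M^{i_0} \surj P$ admits an $A$-linear section $s\colon P \to M^{i_0}$. Placing $s$ in degree $i_0$ (and zero elsewhere) defines a chain map $\phi\colon P[-i_0] \to M$, and by construction $\pi \circ \phi = \opn{id}_{P[-i_0]}$. Consider then the standard distinguished triangle
\[
\tau^{<i_0}M \longrightarrow M \xrightarrow{\pi} P[-i_0] \longrightarrow \tau^{<i_0}M[1]
\]
in $\cat{D}(\cat{Mod} A)$. The existence of $\phi$ shows that $\pi$ is a split epimorphism, hence the triangle splits. Setting $N := \tau^{<i_0}M$ gives an isomorphism $M \cong P[-i_0] \oplus N$, and $N^i = 0$ for all $i \geq i_0$ by the very definition of the canonical truncation.

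I do not anticipate any real obstacle. The argument is essentially a routine manipulation of canonical truncations; the only nontrivial input is the projectivity of $P$, which is used exactly once, to produce the section $s$ of $M^{i_0} \surj P$. The passage from the chain-level section $\phi$ to the splitting of the distinguished triangle is a general property of triangulated categories.
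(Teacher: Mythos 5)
Your proof is correct and follows essentially the same route as the paper's: truncate so that $M$ vanishes above degree $i_0$, then use projectivity of $P$ to split the surjection $M^{i_0} \surj P$. The only (cosmetic) difference is that you package the resulting splitting via the truncation triangle and the general fact that a split epimorphism in a distinguished triangle forces the triangle to split, whereas the paper writes down the complementary subcomplex $N$ explicitly at the chain level.
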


\begin{proof}
By the usual truncation trick (cf.\ \cite[Section I.7]{RD}) we can assume that 
$M^i = 0$ for all $i > i_0$.  Hence we get an exact sequence
$M^{i_0 - 1} \xar{\d} M^{i_0} \to P \to 0$. 
But $P$ is projective, and therefore 
$M^{i_0} \cong P \oplus \d(M^{i_0 - 1})$. 
Define $N^{i_0 - 1} := \opn{Ker}(\d) \subset M^{i_0 - 1}$
and $N^i := M^i$ for $i < i_0 - 1$. 
\end{proof}

Recall that a complex
$M \in \cat{D}(\cat{Mod} A)$ is called {\em perfect} if it is isomorphic to
bounded complex of finitely generated projective modules.
We denote by \linebreak $\cat{D}(\cat{Mod} A)_{\mrm{perf}}$ the full subcategory
of perfect complexes.

\begin{lem} \label{lem:1}
Let $M \in \cat{D}(\cat{Mod} A)_{\mrm{perf}}$ and let $i_0$ be
an integer. If $\mrm{H}^i M = 0$ for all $i > i_0$, then the $A$-module 
$N := \mrm{H}^{i_0} M$ is finitely presented.
\end{lem}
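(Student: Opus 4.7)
The plan is to represent $M$ by a literal bounded complex of finitely generated projectives and then peel it off one degree at a time from the top, using the vanishing $\mrm{H}^i M = 0$ for $i > i_0$ to shrink the complex until nothing remains above degree $i_0$. Since $M$ is perfect we may assume $M$ itself is a complex of finitely generated projective $A$-modules with $M^i = 0$ outside some interval $[a,b]$. If $b < i_0$ then $N = 0$ is trivially finitely presented, so assume $b \geq i_0$ and induct on $d := b - i_0$.

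For the base case $d = 0$, the tail of the complex reads $M^{i_0 - 1} \xar{\d} M^{i_0} \to N \to 0$, exhibiting $N$ as the cokernel of a map between finitely generated projective modules. Covering $M^{i_0}$ and $M^{i_0-1}$ by finitely generated free modules (and invoking Schanuel's lemma to check finite generation of the resulting relation module) shows $N$ admits a presentation by finitely generated frees, so $N$ is finitely presented.

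For the inductive step $d > 0$, the hypotheses $\mrm{H}^b M = 0$ and $M^{b+1} = 0$ force the differential $\d \colon M^{b-1} \to M^b$ to be surjective; since $M^b$ is projective this surjection splits, so $K := \opn{Ker}(\d) \subset M^{b-1}$ is a direct summand of $M^{b-1}$ and is therefore itself finitely generated projective. Form the subcomplex $M' \subset M$ that coincides with $M$ in degrees $< b-1$, has $M'^{b-1} := K$, and has $M'^b := 0$; the relation $\d \circ \d = 0$ guarantees that the image of $M^{b-2} \to M^{b-1}$ already lies in $K$, so this truly is a subcomplex, and the inclusion $M' \hookrightarrow M$ is easily checked to be a quasi-isomorphism via its mapping cone. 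Thus $M'$ is a new perfect representative of the same object in $\cat{D}(\cat{Mod} A)$ with $b$ decreased by one, and the induction hypothesis applies.

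The argument is essentially formal, so I do not expect any genuine obstacle; the only bookkeeping point that needs checking is the quasi-isomorphism $M' \hookrightarrow M$ in the inductive step, which follows at once from the splitting of $\d \colon M^{b-1} \to M^b$.
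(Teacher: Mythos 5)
Your argument is correct and takes essentially the same route as the paper: reduce to a bounded complex of finitely generated projectives vanishing above degree $i_0$ (your induction merely makes explicit the standard truncation, via the split surjectivity of the top differential onto a projective), and then observe that $N$ is the cokernel of a map of finitely generated projectives. The only cosmetic difference is the last step, where Schanuel's lemma is not needed: the paper simply picks a complement $Q$ with $M^{i_0} \oplus Q \cong A^r$, adds $Q$ to both terms, and reads off a presentation $A^{r+s} \to A^r \to N \to 0$.
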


\begin{proof}
This is a bit stronger then \cite[Lemma 1.1(2)]{Ye1}.
By truncation reasons we can assume that 
$M \cong P$, where $P$ is a bounded complex of finitely generated projective
$A$-modules, and $P^i = 0$ for $i > i_0$. So we get an exact sequence 
$P^{i_0 - 1} \to P^{i_0} \to N \to 0$. Suppose $P^{i_0}$ is a direct summand of
$A^{r}$ (the free module of rank $r$), and 
$P^{i_0 - 1}$ is a direct summand of $A^{s}$. Then be rearranging terms we get
an exact sequence
$A^{r+s} \to A^r \to N \to 0$.
\end{proof}

\begin{lem}[K\"unneth trick] \label{lem:2}
Let $A$ be a $\K$-algebra, let
$M \in \cat{D}^-(\cat{Mod} A^{\mrm{op}})$
and let
$N \in \cat{D}^-(\cat{Mod} A)$.
Let $i_0, j_0 \in \Z$ be such that 
$\mrm{H}^{i} M = 0$ and 
$\mrm{H}^{j} N = 0$
for all $i > i_0$ and $j > j_0$. Then 
\[ (\mrm{H}^{i_0} M) \otimes_A (\mrm{H}^{j_0} N) \cong
\mrm{H}^{i_0 + j_0} (M \otimes^{\mrm{L}}_A N)  \]
as $\K$-modules.
\end{lem}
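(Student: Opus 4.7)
The plan is to reduce the statement to a direct cokernel computation, exploiting right exactness of the tensor product.

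After shifting both complexes (which shifts the derived tensor product correspondingly) I may assume $i_0 = j_0 = 0$, so that $\mrm{H}^i M = 0$ for $i > 0$ and $\mrm{H}^j N = 0$ for $j > 0$. I would then construct a quasi-isomorphism $P \to M$, where $P$ is a bounded above complex of projective $A^{\mrm{op}}$-modules with $P^i = 0$ for all $i > 0$. Such a $P$ can be built inductively, starting from a surjection $P^0 \surj \mrm{H}^0 M$ out of a projective module and continuing downward. Since $P$ is a bounded above complex of flat $A^{\mrm{op}}$-modules, $M \otimes^{\mrm{L}}_A N \cong P \otimes_A N$ in $\cat{D}^-(\cat{Mod} \K)$. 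On the $N$ side, smart truncation gives a quasi-isomorphism $\tau^{\leq 0} N \to N$, so I may further assume $N^j = 0$ for $j > 0$.

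With these choices the complex $P \otimes_A N$ is concentrated in non-positive degrees, with $(P \otimes_A N)^0 = P^0 \otimes_A N^0$ and
\[ (P \otimes_A N)^{-1} = (P^{-1} \otimes_A N^0) \oplus (P^0 \otimes_A N^{-1}) , \]
the incoming differential being $\d_P \otimes 1$ on the first summand and $\pm 1 \otimes \d_N$ on the second. Its cokernel computes $\mrm{H}^0(M \otimes^{\mrm{L}}_A N)$. On the other hand $\mrm{H}^0 M \cong \opn{coker}(P^{-1} \to P^0)$ and $\mrm{H}^0 N \cong \opn{coker}(N^{-1} \to N^0)$, and two successive applications of right exactness of $- \otimes_A -$ identify $(\mrm{H}^0 M) \otimes_A (\mrm{H}^0 N)$ with exactly this cokernel, yielding the desired isomorphism.

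I do not expect any serious obstacle. The only point requiring mild care is the construction of $P$ with $P^i = 0$ for $i > 0$: naively applying smart truncation to an arbitrary bounded above projective resolution of $M$ would replace the top term by a kernel and so destroy projectivity there, so one must either build $P$ directly as above, or first smart-truncate $M$ itself and then take a bounded above projective resolution of the truncation.
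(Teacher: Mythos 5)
Your argument is correct and complete. Note, however, that the paper does not actually prove this lemma --- its ``proof'' is the single line ``See \cite[Lemma 2.1]{Ye1}'' --- so there is no internal argument to compare against; what you have written is precisely the standard computation one expects behind that citation: shift to $i_0=j_0=0$, replace $M$ by a projective resolution concentrated in degrees $\leq 0$ and $N$ by its truncation $\tau^{\leq 0}N$, observe that $\mrm{H}^0(P\otimes_A N)$ is the cokernel of $(P^{-1}\otimes_A N^0)\oplus(P^0\otimes_A N^{-1})\to P^0\otimes_A N^0$, and identify this with $(\mrm{H}^0 M)\otimes_A(\mrm{H}^0 N)$ by two applications of right exactness. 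You also correctly flag the one genuine pitfall, namely that a naive truncation of a projective resolution destroys projectivity in top degree, and you give the correct fix. No gaps.
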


\begin{proof}
See \cite[Lemma 2.1]{Ye1}. 
\end{proof}

The next definition is from \cite{Ri2}.

\begin{dfn}
Let $A$ and $B$ be flat $\K$-algebras. A {\em two-sided tilting complex over
$B$-$A$ relative to $\K$} is a complex
$T \in \cat{D}^{\mrm{b}}(\cat{Mod} B \otimes_{\K} A^{\mrm{op}})$
with the following property: 
\begin{itemize}
\item[($\ast$)] there exists a complex 
$S \in \cat{D}^{\mrm{b}}(\cat{Mod} A \otimes_{\K} B^{\mrm{op}})$,
and isomorphisms
$S \otimes^{\mrm{L}}_B T \cong A$
and
$T \otimes^{\mrm{L}}_A S \cong B$
in 
$\cat{D}^{\mrm{b}}(\cat{Mod} A \otimes_{\K} A^{\mrm{op}})$
and
$\cat{D}^{\mrm{b}}(\cat{Mod} B \otimes_{\K} B^{\mrm{op}})$
respectively. 
\end{itemize}

The complex $S$  is called an {\em inverse of $T$}.

In case $B = A$ we say that $T$ is a 
{\em two-sided tilting complex over $A$ relative to $\K$}.
\end{dfn}

The inverse $S$ in the definition is unique up to isomorphism in 
$\cat{D}^{\mrm{b}}(\cat{Mod} A \otimes_{\K} B^{\mrm{op}})$.
Of course $S$ is a two-sided tilting complex over
$A$-$B$ relative to $\K$.

A two-sided tilting complex $T$ induces a $\K$-linear equivalence of
triangulated categories
\[ T \otimes^{\mrm{L}}_A - : 
\cat{D}(\cat{Mod} A) \to \cat{D}(\cat{Mod} B) . \]
This functor restricts to equivalences 
\[ \cat{D}^{\star}(\cat{Mod} A) \to 
\cat{D}^{\star}(\cat{Mod} B) , \]
where $\star$ is either $+, -$ or $\mrm{b}$; and also to an
equivalence
\[ \cat{D}(\cat{Mod} A)_{\mrm{perf}} \to 
\cat{D}(\cat{Mod} B)_{\mrm{perf}} . \]
 See \cite{Ri2} or \cite[Corollary 1.6(4)]{Ye1}.

Conversely we have the next important result, due to Rickard \cite{Ri2}.
For alternative proofs see \cite{Ke1} or \cite[Corollary 1.9]{Ye1}.

\begin{thm}[Rickard] \label{thm:1}
Let $A$ and $B$ be flat $\K$-algebras that are derived Morita equivalent
relative to $\K$. Then there exists a two-sided tilting complex over
$B$-$A$ relative to $\K$.
\end{thm}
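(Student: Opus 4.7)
The plan is to apply the given derived equivalence to the free module $A$ of rank one and then upgrade the resulting one-sided tilting complex to a two-sided bimodule by means of a DG enhancement. Let $F : \cat{D}^{\mrm{b}}(\cat{Mod} A) \to \cat{D}^{\mrm{b}}(\cat{Mod} B)$ be the given $\K$-linear triangle equivalence, and set $T := F(A)$. Because $F$ sends compact objects to compact ones, $T$ is perfect; because $F$ is $\K$-linear, it yields a $\K$-algebra isomorphism $A^{\mrm{op}} \cong \opn{End}_{\cat{D}(A)}(A) \xar{F} \opn{End}_{\cat{D}(B)}(T)$ together with the vanishing $\opn{Hom}_{\cat{D}(B)}(T, T[i]) = 0$ for all $i \neq 0$. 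The remaining task is to promote this formal right $A$-action on $T$ to a bona fide complex of $B$-$A$-bimodules relative to $\K$.

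I would carry out the lift through DG algebras. Pick a bounded complex $\tilde T$ of finitely generated projective left $B$-modules with $\tilde T \cong T$ in $\cat{D}(\cat{Mod} B)$, and form the DG $\K$-algebra
\[ E := \opn{End}^{\bdot}_B(\tilde T)^{\mrm{op}} , \]
which is flat over $\K$ and acts on $\tilde T$ from the right at the level of DG modules. The $\opn{Hom}$ computation above gives $\mrm{H}^i(E) = 0$ for $i \neq 0$ and $\mrm{H}^0(E) \cong A$, so the truncation supplies a zig-zag of DG $\K$-algebra quasi-isomorphisms
\[ E \hookleftarrow \tau^{\leq 0} E \twoheadrightarrow \mrm{H}^0(E) \cong A . \]
Transporting $\tilde T$ along this zig-zag by derived base change produces a complex $T'$ of $B$-$A$-bimodules relative to $\K$ still satisfying $T' \cong T$ in $\cat{D}(\cat{Mod} B)$. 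Setting $S := \mrm{R}\opn{Hom}_B(\tilde T, B)$ and enhancing analogously to an $A$-$B$-bimodule, the evaluation maps $S \otimes^{\mrm{L}}_B T' \to A$ and $T' \otimes^{\mrm{L}}_A S \to B$ are isomorphisms: the first collapses to $\mrm{R}\opn{Hom}_B(\tilde T, \tilde T) \simeq E \simeq A$, and the second follows because $\tilde T$ is a perfect generator of $\cat{D}(\cat{Mod} B)$.

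The main obstacle is the transport step. One has to check that replacing the DG endomorphism algebra $E$ by the quasi-isomorphic ordinary algebra $A$ gives isomorphisms in the full bimodule derived categories $\cat{D}^{\mrm{b}}(\cat{Mod} A \otimes_{\K} A^{\mrm{op}})$ and $\cat{D}^{\mrm{b}}(\cat{Mod} B \otimes_{\K} B^{\mrm{op}})$, rather than merely on one side. This is precisely where the hypothesis that $A$ and $B$ are flat over $\K$ is essential: flatness lets one choose cofibrant DG bimodule resolutions that remain flat after restriction to a single factor, so that derived tensor products computed over the DG enhancement coincide with the naive derived tensor products over $A$ and $B$, and the zig-zag of DG algebra quasi-isomorphisms descends to honest bimodule isomorphisms in the derived category.
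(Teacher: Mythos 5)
The paper does not actually prove Theorem \ref{thm:1}; it cites Rickard \cite{Ri2} and points to \cite{Ke1} and \cite[Corollary 1.9]{Ye1} for alternative proofs. Your plan is a reconstruction of exactly that Keller--Yekutieli argument (one-sided tilting object $T := F(A)$, DG endomorphism algebra $E = \opn{End}^{\bdot}_B(\til{T})^{\mrm{op}}$ with cohomology concentrated in degree $0$, zig-zag of quasi-isomorphisms down to $A$, then lifting the right $A$-action through the zig-zag), so in substance you are following the route the paper relies on; the outline is correct, and the remark following the theorem in the paper even sketches the same DG-resolution strategy for the non-flat case.

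One technical point in your last paragraph deserves sharpening, since you flag it as the main obstacle but attribute the fix slightly inaccurately. The truncation $\tau^{\leq 0} E$ is generally \emph{not} flat over $\K$: its degree-zero component is $\opn{Ker}(\d : E^0 \to E^1)$, a submodule of a flat module, which over a general commutative base need not be flat. So flatness of $E$ itself does not propagate along the zig-zag. What actually makes the transport work is the flatness of $B$ (and of $A$) over $\K$: since $B \otimes_{\K} -$ is exact, applying $B \otimes_{\K} (-)^{\mrm{op}}$ to the quasi-isomorphisms $\tau^{\leq 0} E \hookrightarrow E$ and $\tau^{\leq 0} E \twoheadrightarrow A$ again yields quasi-isomorphisms of DG $\K$-algebras, and restriction/induction along these gives the needed equivalences of derived bimodule categories compatible with forgetting down to $\cat{D}(\cat{Mod} B)$. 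With that correction your argument closes; the remaining verifications (that $T$ is perfect because perfection is intrinsic to the triangulated category, that $\til{T}$ generates, and that the two evaluation maps are isomorphisms of bimodules) are standard and are carried out in the cited sources.
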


\begin{rem} 
Suppose 
$F : \cat{D}(\cat{Mod} A) \to \cat{D}(\cat{Mod} B)$
is a $\K$-linear equivalence of triangulated categories. Then $F$ restricts to
an equivalence between the subcategories of perfect complexes (cf.\ \cite{Ke2}).
This implies that $F$ has finite cohomological dimension
(bounded by the amplitude of $\mrm{H}\, F (A)$). Hence $F$
restricts to an equivalence between the bounded derived categories -- i.e.\ a
derived Morita equivalence.
\end{rem}

\begin{rem}
In our paper \cite{Ye1} the base ring $\K$ is taken to be a field. 
However the results in Sections 1-3 of that paper hold for any commutative base
ring $\K$, as long as the $\K$-algebras in question are {\em flat}. 

It is possible to remove even the flatness condition, at the price of working
with DG algebras. Here is how to do it: choose a DG $\K$-algebra 
$\til{A}$ such that $\til{A}^i = 0$ for $i > 0$ and every $\til{A}^i$ flat as
$\K$-module, with a DG algebra quasi-isomorphism $\til{A} \to A$. 
We call $\til{A} \to A$ a flat DG algebra resolution of $A$ relative to $\K$. 
This can be done (cf.\ \cite[Section 1]{YZ} for commutative $\K$-algebras). 
Likewise choose a flat DG algebra resolution $\til{B} \to B$. 

Let $\til{\cat{D}}(\cat{DGMod} \til{A})^{\mrm{b}}$ be the derived category 
of DG $\til{A}$-modules with bounded cohomologies. 
It is known (cf.\ \cite[Proposition 1.4]{YZ}) that the restriction of scalars
functor
$\cat{D}^{\mrm{b}}(\cat{Mod} A) \to 
\til{\cat{D}}(\cat{DGMod} \til{A})^{\mrm{b}}$
is an equivalence. Therefore a $\K$-linear equivalence 
$\cat{D}^{\mrm{b}}(\cat{Mod} A) \to 
\cat{D}^{\mrm{b}}(\cat{Mod} B)$
is the same as a $\K$-linear equivalence 
$\til{\cat{D}}(\cat{DGMod} \til{A})^{\mrm{b}} \to 
\til{\cat{D}}(\cat{DGMod} \til{B})^{\mrm{b}}$.
Now the proof of \cite[Theorem 1.8]{Ye1} shows that there is a complex
$T \in \til{\cat{D}}(\cat{DGMod} \til{B} 
\otimes_{\K} \til{A}^{\mrm{op}})^{\mrm{b}}$
which is two-sided tilting. 

A different choice of flat DG algebra resolutions $\til{A}\to A$ and
$\til{B} \to B$
will give rise to an equivalent triangulated category
$\til{\cat{D}}(\cat{DGMod} \til{B} 
\otimes_{\K} \til{A}^{\mrm{op}})^{\mrm{b}}$. 
In this sense two-sided tilting
complexes are
independent of the resolutions.
\end{rem}

See Remark \ref{rem:1} for the history of the next theorem.

\begin{thm} \label{thm:2}
Let $A$ and $B$ be flat $\K$-algebras. Assume $A$ is commutative with connected
spectrum. Let $T$ be a two-sided tilting complex over $B$-$A$ relative to $\K$.
Then there is an isomorphism
\[ T \cong P[n] \]
in 
$\cat{D}^{\mrm{b}}(\cat{Mod} B \otimes_{\K} A^{\mrm{op}})$
for some invertible $B$-$A$-bimodule $P$ and integer $n$.
\end{thm}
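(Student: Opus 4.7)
The strategy is to analyze $T$ fiberwise over $\opn{Spec} A$. Since the equivalence $- \otimes^{\mrm{L}}_B T$ sends perfect complexes to perfect complexes and $B$ is perfect as a $B^{\mrm{op}}$-module, $T$ is perfect as a right $A$-module --- equivalently, by commutativity of $A$, as an $A$-module. The same holds for $S$, so both are represented by bounded complexes of finitely generated projective, hence flat, $A$-modules. Commutativity of $A$ also makes the middle $A$-action on $T \otimes_A S \cong B$ well-defined, producing an algebra homomorphism $\phi : A \to \mrm{Z}(B)$. For $\p \in \opn{Spec} A$ set $T(\p) := T \otimes^{\mrm{L}}_A \k(\p)$ and $S(\p) := \k(\p) \otimes^{\mrm{L}}_A S$. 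Flatness and commutativity-enabled re-bracketing give
\[ T(\p) \otimes^{\mrm{L}}_{\k(\p)} S(\p) \;\cong\; (T \otimes_A S) \otimes_A \k(\p) \;\cong\; B \otimes_A \k(\p), \]
an honest $\k(\p)$-module sitting in degree $0$. Because $\k(\p)$ is a field, any nonzero pair $(\mrm{H}^i T(\p), \mrm{H}^j S(\p))$ contributes a nonzero summand to $\mrm{H}^{i+j}$ of the left-hand side; concentration in degree $0$ therefore forces $T(\p)$ to have cohomology in exactly one degree, call it $n(\p)$.

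The crux of the argument is to show $n : \opn{Spec} A \to \Z$ is constant. Fix a bounded resolution of $T$ by finitely generated projective $A$-modules; the function $\p \mapsto \dim_{\k(\p)} \mrm{H}^i(T \otimes^{\mrm{L}}_A \k(\p))$ is upper semi-continuous on $\opn{Spec} A$, so $U_i := \{\p : n(\p) = i\}$ is closed for each $i$. Only finitely many $U_i$ are nonempty (as $T$ is bounded), they are pairwise disjoint and cover $\opn{Spec} A$, so each $U_i$ is automatically also open. Connectedness of $\opn{Spec} A$ now forces $n(\p) \equiv n_0$ to take a single integer value.

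At each $\p$, a minimal resolution of $T_\p$ over $A_\p$ has differentials vanishing modulo $\p$, so by the preceding the minimal resolution lives entirely in degree $n_0$. Hence $\mrm{H}^i(T) = 0$ for $i \ne n_0$, and $P := \mrm{H}^{n_0}(T)$ is locally free as an $A$-module; Lemma \ref{lem:1} ensures it is finitely presented, so being finitely presented and locally free it is finitely generated projective. Functorial truncation in the abelian category $\cat{Mod}\, B \otimes_\K A^{\mrm{op}}$ then yields $T \cong P[-n_0]$ in $\cat{D}^{\mrm{b}}$, which is the claimed form $P[n]$ with $n := -n_0$. Running the same analysis on $S$ produces $S \cong Q[n_0]$ with $Q$ finitely generated projective over $A$, and substituting into the tilting identities gives $P \otimes_A Q \cong B$ and $Q \otimes_B P \cong A$ as bimodules --- the derived tensors become underived because the results are concentrated in degree $0$ --- exhibiting $P$ as an invertible $B$-$A$-bimodule with inverse $Q$, completing the proof.
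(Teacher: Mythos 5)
Your overall strategy---show each fiber $T(\p) := T \otimes^{\mrm{L}}_A \k(\p)$ is concentrated in a single degree, use semicontinuity plus connectedness to make that degree constant, then finish with minimal resolutions and truncation---is a genuinely different route from the paper's, which instead performs the \emph{flat} localization $T_{\p} = B_{\p} \otimes_B T \otimes_A A_{\p}$, invokes the known local-ring case \cite[Theorem 2.3]{Ye1}, and uses Bourbaki's criterion to spread freeness of $P_\p$ to a neighborhood. But your route has a genuine gap at its opening move: the identification
\[ T(\p) \otimes^{\mrm{L}}_{\k(\p)} S(\p) \;\cong\; (T \otimes^{\mrm{L}}_A S) \otimes_A \k(\p) \;\cong\; B \otimes_A \k(\p) \]
with the right-hand side sitting in degree $0$. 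The left-hand side is canonically $T \otimes^{\mrm{L}}_A \k(\p) \otimes^{\mrm{L}}_A S$, with $\k(\p)$ stuck in the \emph{middle}; to slide it to the outside you must know that right multiplication by $a \in A$ on $T$ coincides, in $\cat{D}(\cat{Mod} B \otimes_{\K} A^{\mrm{op}})$, with left multiplication by the corresponding central element of $B$ --- this is the content of \cite[Proposition 2.4]{Ye1} and does not follow from merely having some homomorphism $\phi : A \to \mrm{Z}(B)$. Even granting that, what you get is $\k(\p) \otimes^{\mrm{L}}_A B$, a derived tensor along the \emph{non-flat} map $A \to \k(\p)$; it is concentrated in degree $0$ only if $B$ is flat over $A$ at $\p$, which at this stage is unknown (it is a \emph{consequence} of the theorem, since eventually $B_\p \cong \opn{End}_{A_\p}(P_\p)$ is a matrix algebra). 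You also need $B \otimes_A \k(\p) \neq 0$, i.e.\ $T(\p) \neq 0$, for your degree function $n(\p)$ to be defined at every point, and this is not addressed. Without degree-$0$ concentration of $T(\p) \otimes^{\mrm{L}}_{\k(\p)} S(\p)$, the K\"unneth argument no longer forces $T(\p)$ into one degree, and the semicontinuity/connectedness machine has nothing to run on.

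The natural repair is to localize first: $A \to A_\p$ is flat, so by \cite[Lemma 2.5]{Ye1} $T_\p$ is a two-sided tilting complex over $B_\p$-$A_\p$ with $A_\p$ local, and only then pass to the residue field --- at which point you are essentially reproducing the paper's proof. The portions of your argument that do survive are sound and worth noting: upper semicontinuity of $\p \mapsto \dim_{\k(\p)} \mrm{H}^i(T \otimes^{\mrm{L}}_A \k(\p))$ for a bounded complex of finitely generated projectives holds over an arbitrary commutative ring (so, like the paper's openness-of-support argument, it avoids the noetherian hypothesis flagged in Remark \ref{rem:1}); the splitting-off of trivial summands to get a minimal complex works over any local ring; and the final truncation and invertibility steps match the paper's.
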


\begin{proof}
We may assume that $A \neq 0$, so that $T \neq 0$.
The complex $T$ is perfect over $B$ and over $A^{\mrm{op}}$
(cf.\ \cite[Theorem 1.6]{Ye1}).
As in \cite[Proposition 2.4]{Ye1} the complex $T$ induces a $\K$-algebra
isomorphism $A \cong \mrm{Z}(B)$.

Let
\[ n := - \opn{sup} \{ i \mid \mrm{H}^i T \neq 0 \} , \]
and let $P := \mrm{H}^{-n} T$. This is a $B$-$A$-bimodule. 
By Lemma \ref{lem:1}, $P$ is finitely presented as right $A$-module.

For a prime $\p \in \opn{Spec} A$, with  corresponding local ring
$A_{\p}$, we write $P_{\p} := P \otimes_A A_{\p}$.
Define $Y \subset \opn{Spec} A$ to be the support of $P$, i.e.\ 
\[ Y := \{ \p \in \opn{Spec} A \mid P_{\p} \neq 0 \} . \]
Since $P$ is finitely generated it follows that $Y$ is a closed subset of 
$\opn{Spec} A$.

Take any prime $\p \in Y$, and let $B_{\p} :=  B \otimes_A A_{\p}$.
Then, by \cite[Lemma 2.5]{Ye1}, the complex
\[ T_{\p} := B_{\p} \otimes_B T \otimes_A A_{\p} \in 
\cat{D}^{\mrm{b}}(\cat{Mod} B_{\p} \otimes_{\K} A_{\p}^{\mrm{op}})
\]
is a two-sided tilting complex over $B_{\p}$-$A_{\p}$.
Since
\[ \mrm{H}^{-n} T_{\p} \cong P_{\p} \neq 0 , \]
\cite[Theorem 2.3]{Ye1} implies that 
\begin{equation} \label{eqn:1}
T_{\p} \cong P_{\p}[n] \in \cat{D}^{\mrm{b}}(\cat{Mod} B_{\p} \otimes_{\K}
A_{\p}^{\mrm{op}}) .
\end{equation}
Thus $P_{\p}$ is an invertible $B_{\p}$-$A_{\p}$-bimodule. This implies that 
$P_{\p}$ is a free $A_{\p}$-module, of rank $r > 0$.
According to \cite[Section II.5.1,  Corollary]{CA} there is an open
neighborhood $U$ of $\p$ in $\opn{Spec} A$ on which $P$ is free of rank $r$. 
In particular $P_{\q} \neq 0$ for all $\q \in U$.
Therefore $U \subset Y$. 

The conclusion is that $Y$ is also open in $\opn{Spec} A$.
Since $\opn{Spec} A$ is connected it follows that 
$Y = \opn{Spec} A$.
Another conclusion is that $P$ is projective as $A$-module --
see \cite[Section II.5.2, Theorem 1]{CA}.

Going back to equation (\ref{eqn:1}) we see that 
$(\mrm{H}^i T)_{\p} \cong \mrm{H}^i T_{\p} = 0$
for all $i \neq -n$. Therefore 
$\mrm{H}^i T = 0$ for $i \neq -n$. By truncation we get an isomorphism
$T \cong P[n]$ in 
$\cat{D}^{\mrm{b}}(\cat{Mod} B \otimes_{\K} A^{\mrm{op}})$.
Finally by \cite[Proposition 2.2]{Ye1} the $B$-$A$-bimodule $P$ is invertible.
\end{proof}

\begin{rem} \label{rem:1}
Theorem \ref{thm:2} (for a field $\K$) is \cite[Theorem 2.6]{Ye1}.
However the proof there is only correct when $A$ is noetherian (the hidden
assumption is that $\opn{Spec} A$ is a noetherian topological space). 

The same result was proved independently (and pretty much simultaneously, i.e.\
circa 1997) by Rouquier and Zimmermann \cite{RZ}. 
\end{rem}

\begin{cor}
Let $A$ and $B$ be flat $\K$-algebras with $A$ commutative.
If $A$ and $B$ are derived Morita equivalent relative to $\K$, then they are
Morita equivalent relative to $\K$.
\end{cor}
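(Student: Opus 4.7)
By Rickard's theorem (Theorem \ref{thm:1}) there is a two-sided tilting complex $T$ over $B$-$A$ relative to $\K$. I cannot invoke Theorem \ref{thm:2} directly because $\opn{Spec} A$ need not be connected; the plan is to split $\opn{Spec} A$ into finitely many clopen pieces corresponding to idempotents of $A$, so that on each piece the restriction of the tilting complex is concentrated in a single cohomological degree, and then to assemble a bimodule giving the classical Morita equivalence.

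I argue by induction on the cohomological amplitude of $T$. Let $M := \max \{i : \mrm{H}^i T \neq 0\}$. Since $T$ is perfect over $A^{\mrm{op}}$ (as in the proof of Theorem \ref{thm:2}), Lemma \ref{lem:1} shows that $Q := \mrm{H}^M T$ is finitely presented over $A$, so its support $Y \subset \opn{Spec} A$ is closed. Each local ring $A_{\p}$ has connected spectrum, so for $\p \in Y$ Theorem \ref{thm:2} applied to the localized two-sided tilting complex $T_{\p}$ yields $T_{\p} \cong Q_{\p}[-M]$ with $Q_{\p}$ free of positive rank over $A_{\p}$. The argument from the proof of Theorem \ref{thm:2} showing that this rank is locally constant then implies that $Y$ is also open, hence clopen, so $Y$ corresponds to an idempotent $e \in A$. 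Via the $\K$-algebra isomorphism $A \cong \mrm{Z}(B)$ induced by $T$, $e$ is central in $B$ as well.

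Now $T = eT \oplus (1-e)T$ splits over the resulting product of rings. On $Y$ each $T_{\p}$ is concentrated in degree $-M$, so $eT$ has cohomology only in degree $M$, whence $eT \cong (eQ)[-M]$; by \cite[Proposition 2.2]{Ye1}, a two-sided tilting complex supported in a single cohomological degree is the shift of an invertible bimodule, so $eQ$ is an invertible $eB$-$eA$-bimodule. The complement $(1-e)T$ is a two-sided tilting complex over $(1-e)B$-$(1-e)A$ with strictly smaller cohomological amplitude, so by induction it arises from an invertible $(1-e)B$-$(1-e)A$-bimodule $P'$. Gluing, $P := eQ \oplus P'$ is an invertible $B$-$A$-bimodule, and $P \otimes_A -$ is the desired classical Morita equivalence. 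The main thing to verify carefully is the base case of the induction---that a two-sided tilting complex concentrated in one cohomological degree really is a shifted invertible bimodule---which is exactly the content of the cited proposition.
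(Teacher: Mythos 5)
Your argument is correct, but it takes a genuinely different route from the paper's. The paper's proof of this corollary is a one\--line reduction: it invokes the first paragraph of the proof of \cite[Theorem 2.6]{Ye1} to decompose $A$ into a finite product of rings with connected spectra, and then applies Theorem \ref{thm:2} to each factor. You instead run an induction on the cohomological amplitude of $T$, at each stage splitting off the clopen support of the top cohomology as an idempotent piece on which $T$ is concentrated in a single degree. Both proofs rest on the same local input (localize at a prime, use that a tilting complex over a local ring is a shifted invertible bimodule, deduce that the relevant support is open as well as closed), but your global assembly is different and arguably more robust: it never requires $\opn{Spec} A$ to break into finitely many \emph{open} connected components, a property that can fail for non-noetherian $A$ and is precisely the ``hidden assumption'' the paper flags in Remark \ref{rem:1}. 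The price is length, plus a few points you should tighten: (i) $Q_{\p}[-M]$ is concentrated in cohomological degree $M$, not $-M$ (your later formula $eT \cong (eQ)[-M]$ is the consistent one); (ii) the induction terminates because the set of degrees in which $(1-e)T$ has nonzero cohomology is contained in that of $T$ with $M$ removed, so the amplitude strictly drops, and the base case $T=0$ forces $A=B=0$; (iii) you should say explicitly that the left action of $e$ (via $A \cong \mrm{Z}(B)$) and its right action on $T$ coincide --- this is built into how the isomorphism of \cite[Proposition 2.4]{Ye1} is defined --- since that is what makes $T = eT \oplus (1-e)T$ a decomposition of bimodule complexes, kills the cross terms in $T \otimes^{\mrm{L}}_A S$, and thereby exhibits $eT$ as a two-sided tilting complex over $eB$-$eA$.
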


\begin{proof}
Use the first paragraph in the proof of \cite[Theorem 2.6]{Ye1} to pass to the
case when $\opn{Spec} A$ is connected, and then apply Theorem \ref{thm:2}.
\end{proof}

We denote by $\opn{Pic}_{\K}(A)$ the noncommutative Picard group of $A$,
consisting of isomorphism classes of invertible
$A$-$A$-bimodules relative to $\K$. The operation is $- \otimes_A -$.
Here is a definition from \cite{Ye1} extending this notion to the derived
setting:

\begin{dfn}
Let $A$ be a flat $\K$-algebra. The {\em derived Picard group of $A$ relative
to $\K$} is 
\[ \opn{DPic}_{\K}(A) := 
\frac{ \{ \tup{two-sided tilting complexes over $A$ relative to $\K$} \} }
{\tup{isomorphism}} , \]
where isomorphism is in 
$\cat{D}^{\mrm{b}}(\cat{Mod} A \otimes_{\K} A^{\mrm{op}})$.
The operation is $- \otimes^{\mrm{L}}_A -$, and the unit element is the
bimodule $A$. 
\end{dfn}

There is a canonical injective group homomorphism
\[ \opn{Pic}_{\K}(A) \times \Z \to  \opn{DPic}_{\K}(A) . \]
It formula is $(P, n) \mapsto P[n]$.

\begin{rem}
When $A$ is either local, or commutative with connected spectrum,
the homomorphism above is in fact bijective. 
On the other hand, if $A$ is the algebra of upper triangular 
$n \times n$ matrices over $\K$ ($n > 0$, $\K$ a field), then the 
bimodule 
$A^* := \opn{Hom}_{\K}(A, \K)$ is a two-sided tilting complex that does not
belong to $\opn{Pic}_{\K}(A) \times \Z$. 
This is a sort of ``Calabi-Yau'' phenomenon. See \cite{Ye1} for details. 
\end{rem}

Let $A$ and $B$ be $\K$-algebras, and let $P$ be an invertible
$B$-$A$-bimodule relative to $\K$.
Let $\K'$ be any commutative $\K$-algebra, and define 
$A' := \K' \otimes_{\K} A$, $B' := \K' \otimes_{\K} B$.
and $P' := \K' \otimes_{\K} P$.
Then $P'$ is an invertible $B'$-$A'$-bimodule relative to $\K'$.
When we take $B = A$ this fact gives rise to a group homomorphism
\[ \opn{Pic}_{\K}(A) \to \opn{Pic}_{\K'}(A') . \]

For the derived version we need flatness. The next theorem is the only new
result in this section of the paper. 

\begin{thm} \label{thm:3}
Let $A, B, C$ be flat $\K$-algebras, and let $\K'$ be a commutative
$\K$-algebra. Define $A' := \K' \otimes_{\K} A$, $B' := \K' \otimes_{\K} B$
and $C' := \K' \otimes_{\K} C$.
Given complexes
\[ M \in \cat{D}^{-}(\cat{Mod} A \otimes_{\K} B^{\mrm{op}}) \]
and
\[ N \in \cat{D}^{-}(\cat{Mod} B \otimes_{\K} C^{\mrm{op}}) , \]
let us define
\[ M' := \K' \otimes^{\mrm{L}}_{\K} M \in 
\cat{D}^{-}(\cat{Mod} A' \otimes_{\K'} {B'}^{\, \mrm{op}}) \]
and
\[ N' := \K' \otimes^{\mrm{L}}_{\K} N \in 
\cat{D}^{-}(\cat{Mod} B' \otimes_{\K'} {C'}^{\, \mrm{op}}) . \]
Then there is an isomorphism
\[ M' \otimes^{\mrm{L}}_{B'} N' \cong
\K' \otimes^{\mrm{L}}_{\K} (M \otimes^{\mrm{L}}_{B} N) \]
in 
$\cat{D}^{-}(\cat{Mod} A \otimes_{\K} C^{\mrm{op}})$,
functorial in $M$ and $N$.
\end{thm}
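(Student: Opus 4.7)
The plan is to pick resolutions that are simultaneously K-flat over all base rings in sight, so that every derived tensor product in the statement reduces to an honest one; the desired isomorphism then becomes a routine reassociation of tensors.

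First I would choose a K-projective resolution $P \to M$ over $A \otimes_{\K} B^{\mrm{op}}$ and $Q \to N$ over $B \otimes_{\K} C^{\mrm{op}}$. Because $A$ (resp.\ $C$) is flat over $\K$, the algebra $A \otimes_{\K} B^{\mrm{op}}$ is flat over both $B^{\mrm{op}}$ and $\K$, so $P$ is K-flat over each; symmetrically for $Q$. Consequently $M \otimes^{\mrm{L}}_B N \cong P \otimes_B Q$, and since $P$ and $Q$ are K-flat over $\K$ the identifications $M' \cong \K' \otimes_{\K} P$ and $N' \cong \K' \otimes_{\K} Q$ (in the respective derived categories) are also underived.

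The substantive content of the argument consists of two K-flatness claims. The first is that $\K' \otimes_{\K} P$ is K-flat as a right $B'$-module: this rests on the natural isomorphism of complexes
\[
(\K' \otimes_{\K} P) \otimes_{B'} L \;\cong\; P \otimes_B L ,
\]
defined by $(\alpha \otimes p) \otimes l \mapsto p \otimes \alpha l$, valid for any left $B'$-module $L$ viewed as a $B$-module through $B \to B'$, together with the K-flatness of $P$ over $B^{\mrm{op}}$ and the fact that restriction of scalars preserves K-acyclicity. The second claim is that $P \otimes_B Q$ is K-flat over $\K$; this follows by rewriting $L \otimes_{\K} (P \otimes_B Q) \cong (L \otimes_{\K} P) \otimes_B Q$ for any $\K$-complex $L$, and using K-flatness of $P$ over $\K$ and of $Q$ over $B$ in succession.

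Granted the first claim, $M' \otimes^{\mrm{L}}_{B'} N' = (\K' \otimes_{\K} P) \otimes_{B'} (\K' \otimes_{\K} Q)$ underived; specializing the displayed isomorphism to $L = \K' \otimes_{\K} Q$ and commuting the outer $\K'$ past the $B$-tensor yields $M' \otimes^{\mrm{L}}_{B'} N' \cong \K' \otimes_{\K} (P \otimes_B Q)$. By the second claim this equals $\K' \otimes^{\mrm{L}}_{\K} (P \otimes_B Q) \cong \K' \otimes^{\mrm{L}}_{\K} (M \otimes^{\mrm{L}}_B N)$. Functoriality in $M$ and $N$ is inherited from the functoriality up to homotopy of K-projective resolutions. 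The main obstacle is simply the careful bookkeeping of which algebras different complexes are K-flat over---the flatness of $A$, $B$, and $C$ over $\K$ is used precisely to keep these properties propagating through each step, and the cancellation identity $(\K' \otimes_{\K} P) \otimes_{B'} L \cong P \otimes_B L$ is what converts the base change from $\K$ to $\K'$ into a genuine reassociation rather than a nontrivial descent computation.
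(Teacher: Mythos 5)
Your proof is correct and follows essentially the same route as the paper's: resolve $M$ and $N$ by bounded-above complexes of projectives over $A\otimes_{\K}B^{\mrm{op}}$ and $B\otimes_{\K}C^{\mrm{op}}$, use the flatness of $A$, $B$, $C$ over $\K$ to see that these resolutions compute every derived tensor in sight, and reduce the statement to the canonical cancellation isomorphism $(\K'\otimes_{\K}P)\otimes_{B'}(\K'\otimes_{\K}Q)\cong \K'\otimes_{\K}(P\otimes_B Q)$. The only difference is cosmetic: you phrase things in terms of K-flatness and verify explicitly the two flatness claims and the cancellation identity that the paper treats as standard.
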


\begin{proof}
First let us observe that $A \otimes_{\K} B^{\mrm{op}}$ is a flat
$\K$-algebra, and 
\[ A' \otimes_{\K'} {B'}^{\, \mrm{op}} \cong 
\K' \otimes_{\K} (A \otimes_{\K} B^{\mrm{op}}) \]
as $\K'$-algebras.

Choose an isomorphism
$M \cong P$ in 
$\cat{D}^{-}(\cat{Mod} A \otimes_{\K} B^{\mrm{op}})$,
where each $P^i$ is projective over $A \otimes_{\K} B^{\mrm{op}}$.
Then 
\[ M' \cong \K' \otimes_{\K} P \in
\cat{D}^{-}(\cat{Mod} A' \otimes_{\K'} {B'}^{\, \mrm{op}}) , \]
and each $\K' \otimes_{\K} P^i$ is flat over $A'$ and over
${B'}^{\, \mrm{op}}$.

Similarly let us choose an isomorphism
$N \cong Q$ in 
$\cat{D}^{-}(\cat{Mod} B \otimes_{\K} C^{\mrm{op}})$;
so
$N' \cong \K' \otimes_{\K} Q$.

Now
\[ M' \otimes^{\mrm{L}}_{B'} N' \cong
(\K' \otimes_{\K} P) \otimes_{B'} (\K' \otimes_{\K} Q) \]
in
$\cat{D}^{-}(\cat{Mod} A' \otimes_{\K} {C'}^{\, \mrm{op}})$.
There is a  canonical isomorphism
\[ (\K' \otimes_{\K} P) \otimes_{B'} (\K' \otimes_{\K} Q) \cong
\K' \otimes_{\K} (P \otimes_{B} Q) \]
as complexes of $A' \otimes_{\K} {C'}^{\, \mrm{op}}$ -modules; and therefore
this is also an isomorphism also in 
$\cat{D}^{-}(\cat{Mod} A' \otimes_{\K} {C'}^{\, \mrm{op}})$.

Next we have
\[ M \otimes^{\mrm{L}}_{B} N \cong P \otimes_{B} Q \]
in $\cat{D}^{-}(\cat{Mod} A \otimes_{\K} {C}^{\mrm{op}})$.
But since $P \otimes_{B} Q$ is a complex of flat $\K$-modules, we also have
\[ \K' \otimes^{\mrm{L}}_{\K} (M \otimes^{\mrm{L}}_{B} N) \cong 
\K' \otimes_{\K} (P \otimes_{B} Q)  \]
in 
$\cat{D}^{-}(\cat{Mod} A' \otimes_{\K} {C'}^{\, \mrm{op}})$.
\end{proof}

\begin{cor} \label{cor:1}
Let $A$ and $B$ be flat $\K$-algebras, and let $\K'$ be a commutative
$\K$-algebra. Define $A' := \K' \otimes_{\K} A$ and $B' := \K' \otimes_{\K} B$.
Suppose $T$ is a two-sided tilting complex over $B$-$A$ relative to $\K$,
with inverse $S$.
Define 
\[ T' := \K' \otimes^{\mrm{L}}_{\K} T \in 
\cat{D}^{\mrm{b}}(\cat{Mod} B' \otimes_{\K'} {A'}^{\, \mrm{op}}) \]
and
\[ S' := \K' \otimes^{\mrm{L}}_{\K} S \in 
\cat{D}^{\mrm{b}}(\cat{Mod} A' \otimes_{\K'} {B'}^{\, \mrm{op}}) . \]
Then $T'$ is a is a two-sided tilting complex over $B'$-$A'$ relative to
$\K'$, with inverse $S'$.
\end{cor}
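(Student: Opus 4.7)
The plan is to derive both tilting isomorphisms for $(T', S')$ by two symmetric applications of Theorem \ref{thm:3}, and then verify boundedness as a separate final step.

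First I would apply Theorem \ref{thm:3} with its three flat $\K$-algebras taken to be $(A, B, A)$ and with $M := S$, $N := T$; this is admissible since $S \in \cat{D}^-(\cat{Mod} A \otimes_\K B^{\mrm{op}})$ and $T \in \cat{D}^-(\cat{Mod} B \otimes_\K A^{\mrm{op}})$. The theorem then yields a functorial isomorphism
\[ S' \otimes^{\mrm{L}}_{B'} T' \cong \K' \otimes^{\mrm{L}}_\K (S \otimes^{\mrm{L}}_B T) . \]
Substituting the tilting hypothesis $S \otimes^{\mrm{L}}_B T \cong A$ and using $\K$-flatness of $A$ (so that $\K' \otimes^{\mrm{L}}_\K A \cong \K' \otimes_\K A = A'$) identifies the right-hand side with $A'$. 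The mirror application of Theorem \ref{thm:3}, with the algebras $(B, A, B)$ and $M := T$, $N := S$, gives $T' \otimes^{\mrm{L}}_{A'} S' \cong B'$ by the same bookkeeping.

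The remaining task is to promote $T'$ and $S'$ from $\cat{D}^-$ to $\cat{D}^{\mrm{b}}$. For this I would invoke \cite[Theorem 1.6]{Ye1}: the tilting complex $T$ is perfect as a left $B$-module, so it admits a representative $Q$ that is a bounded complex of finitely generated projective $B$-modules. Since $B$ is flat over $\K$, each $Q^i$ is $\K$-flat, so $\K' \otimes^{\mrm{L}}_\K T$ is represented in $\cat{D}(\cat{Mod} \K')$ by the visibly bounded complex $\K' \otimes_\K Q$. Because the derived tensor product $\K' \otimes^{\mrm{L}}_\K(-)$ is indifferent to ambient bimodule structure (as recalled in the discussion preceding Lemma \ref{lem:3}) and the forgetful functor preserves cohomology, $T'$ has bounded cohomology as an object of $\cat{D}(\cat{Mod} B' \otimes_{\K'} {A'}^{\mrm{op}})$. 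The same reasoning applied to the perfect right $A$-module $S$ disposes of $S'$.

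I expect the only step requiring real care to be this boundedness check: Theorem \ref{thm:3} alone places the outputs merely in $\cat{D}^-$, and the upgrade to $\cat{D}^{\mrm{b}}$ rests on the auxiliary observation that a one-sided perfect resolution of $T$ may be substituted for a bimodule resolution when computing the $\K$-module cohomology of $\K' \otimes^{\mrm{L}}_\K T$, which then forces the bimodule cohomology of $T'$ to be bounded as well.
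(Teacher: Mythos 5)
Your proposal is correct and follows essentially the same route as the paper: two symmetric applications of Theorem \ref{thm:3} to the pairs $(S,T)$ and $(T,S)$, followed by the identifications $\K' \otimes^{\mrm{L}}_{\K} A \cong A'$ and $\K' \otimes^{\mrm{L}}_{\K} B \cong B'$ via flatness. The only difference is that you also spell out the boundedness of $T'$ and $S'$ using perfectness over $B$ and $A^{\mrm{op}}$, a point the paper's one-line proof leaves implicit; that check is valid and harmless.
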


\begin{proof}
By the theorem we have
\[ T' \otimes^{\mrm{L}}_{A'} S' \cong 
\K' \otimes^{\mrm{L}}_{\K} (T \otimes^{\mrm{L}}_{A} S) \cong
\K' \otimes^{\mrm{L}}_{\K} B \cong B' \]
in
$\cat{D}^{\mrm{b}}(\cat{Mod} B' \otimes_{\K'} {B'}^{\mrm{op}})$; 
and similarly
$S' \otimes^{\mrm{L}}_{B'} T' \cong A'$. 
\end{proof}

\begin{cor}
Let $A$ be a flat $\K$-algebra, and let $\K'$ be a commutative
$\K$-algebra. Define $A' := \K' \otimes_{\K} A$. Then
the formula 
$T \mapsto \K' \otimes^{\mrm{L}}_{\K} T$
defines a group homomorphism
\[ \opn{DPic}_{\K}(A) \to \opn{DPic}_{\K'}(A') . \]
\end{cor}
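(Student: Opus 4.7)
The plan is to verify three things: that $T \mapsto \K' \otimes^{\mrm{L}}_{\K} T$ carries tilting complexes to tilting complexes and is well-defined on isomorphism classes, that it sends the unit to the unit, and that it respects the derived tensor product. Nearly all of this has been prepared in the preceding results, so the proof should mostly be a matter of assembly.

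First I would argue well-definedness. If $T$ is a two-sided tilting complex over $A$ relative to $\K$ with inverse $S$, then Corollary \ref{cor:1} (applied with $B = A$) says that $T' := \K' \otimes^{\mrm{L}}_{\K} T$ is a two-sided tilting complex over $A'$ relative to $\K'$, with inverse $S' := \K' \otimes^{\mrm{L}}_{\K} S$. Since the derived tensor product $\K' \otimes^{\mrm{L}}_{\K} -$ is a functor on $\cat{D}^{-}(\cat{Mod} A \otimes_{\K} A^{\mrm{op}})$, an isomorphism $T_1 \cong T_2$ in $\cat{D}^{\mrm{b}}(\cat{Mod} A \otimes_{\K} A^{\mrm{op}})$ produces an isomorphism $T_1' \cong T_2'$ in $\cat{D}^{\mrm{b}}(\cat{Mod} A' \otimes_{\K'} {A'}^{\, \mrm{op}})$. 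Hence the assignment descends to a well-defined map of sets $\opn{DPic}_{\K}(A) \to \opn{DPic}_{\K'}(A')$.

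Next, the unit element of $\opn{DPic}_{\K}(A)$ is the bimodule $A$ itself. Because $A$ is flat over $\K$, we have $\K' \otimes^{\mrm{L}}_{\K} A \cong \K' \otimes_{\K} A = A'$ in $\cat{D}^{\mrm{b}}(\cat{Mod} A' \otimes_{\K'} {A'}^{\, \mrm{op}})$, which is the unit of $\opn{DPic}_{\K'}(A')$.

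Finally, multiplicativity is exactly the special case $B = C = A$ of Theorem \ref{thm:3}: for two tilting complexes $T_1, T_2$ over $A$ relative to $\K$, that theorem supplies a functorial isomorphism
\[ (\K' \otimes^{\mrm{L}}_{\K} T_1) \otimes^{\mrm{L}}_{A'} (\K' \otimes^{\mrm{L}}_{\K} T_2) \cong \K' \otimes^{\mrm{L}}_{\K} (T_1 \otimes^{\mrm{L}}_{A} T_2) \]
in $\cat{D}^{-}(\cat{Mod} A' \otimes_{\K'} {A'}^{\, \mrm{op}})$, and since both sides are actually bounded (being tilting, hence perfect, so in $\cat{D}^{\mrm{b}}$) this is an isomorphism in the ambient group. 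There is no real obstacle here — Theorem \ref{thm:3} has already absorbed the one potentially delicate point, namely the compatibility of base change with derived tensor product for bimodules over possibly non-flat $\K'$ — so the remaining verification is entirely formal.
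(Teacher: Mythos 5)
Your proof is correct and follows the same route as the paper, which simply declares the result ``immediate from the previous corollary'': well-definedness comes from Corollary \ref{cor:1} (with $B = A$) plus functoriality, and multiplicativity from Theorem \ref{thm:3} with $B = C = A$. You have merely spelled out the checks (unit, products, descent to isomorphism classes) that the paper leaves implicit, and each one is handled correctly.
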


\begin{proof}
Immediate from the previous corollary.
\end{proof}

\section{Associative Deformations}

In this section we keep the following setup:

\begin{setup} \label{setup:1}
$\K$ is a complete local noetherian commutative ring, with
maximal ideal $\m$ and residue field $\k = \K / \m$.
\end{setup}

Let $M$ be a $\K$-module. Its $\m$-adic completion is the $\K$-module
\[ \what{M} := \lim_{\leftarrow i}\, M / \m^i M . \]
Recall that $M$ is called
{\em $\m$-adically complete} 
(some texts, e.g.\ \cite{CA}, use the term ``separated and complete'')
if the canonical homomorphism $M \to \what{M}$ is bijective. 
Every finitely generated $\K$-module is complete; but this is not true for
infinitely generated modules. For instance, if $N$ is a free $\K$-module of
infinite rank, and if the ideal $\m$ is not nilpotent, then the canonical
homomorphism $N \to \what{N}$ in injective but not surjective. 
Still in this instance the induced homomorphism
$\k \otimes_{\K} N \to \k \otimes_{\K} \what{N}$
is bijective. See \cite[Theorem 1.12]{Ye2}.

In \cite[Corollary 2.12]{Ye2} we prove that a $\K$-module $M$ is flat and
$\m$-adically complete if and only if $M \cong \what{N}$ for some free
$\K$-module $N$.

Sometimes one is given a ring homomorphism $\k \to \K$ lifting the canonical
surjection $\K \to \k$; and then $\K$ becomes a $\k$-algebra. In this case the
free $\K$-module $N$ can be expresses as $N = \K \otimes_{\k} V$ for some
$\k$-module $V$; and its completion 
is $M =  \what{N} = \K \hatotimes{\k} V$.
Moreover $V \cong \k \otimes_{\K} N \cong \k \otimes_{\K} M$ as $\k$-modules.

\begin{exa}
Take $\K := \k[[\hbar]]$, the power series ring in
the variable $\hbar$ over the field $\k$. The maximal ideal $\m$ is generated by
$\hbar$. For a $\k$-module $V$ we have a canonical isomorphism
$\k[[\hbar]] \hatotimes{\k} V \cong V[[\hbar]]$, the latter being set of formal
power series with coefficients in $V$.
\end{exa}

The next definition is used in \cite{Ye3}:

\begin{dfn}
Let $A$ be a flat $\m$-adically complete $\K$-algebra, such that the
$\k$-algebra
$\bar{A} := \k \otimes_{\K} A$ is commutative. 
Then we call $A$ an {\em associative $\K$-deformation of $\bar{A}$}.
\end{dfn}

If $\K$ is a $\k$-algebra then we can find a (noncanonical) isomorphism of 
$\K$-modules $A \cong \K \hatotimes{\k} \bar{A}$.
The multiplication induced on 
$\K \hatotimes{\k} \bar{A}$ by such an isomorphism is called a {\em star
product}.

\begin{exa}
Suppose $\bar{A}$ is some commutative $\k$-algebra, and
$\K = \k[[\hbar]]$. Then a star product $\star$ on the $\k[[\hbar]]$-module 
$A := \bar{A}[[\hbar]]$ is expressed by a series 
$\{ \beta_i \}_{i \geq 1}$ of $\k$-bilinear functions
$\beta_i : \bar{A} \times \bar{A} \to \bar{A}$, as follows:
\[ c_1 \star c_2 = c_1 c_2 + \sum_{i \geq 1} \beta_i(c_1, c_2) \hbar^i \]
for $c_1, c_2 \in \bar{A}$. 
\end{exa}

We shall need this version of the Nakayama Lemma: 

\begin{lem} \label{lem:4}
Let $\K$ be as in Setup \tup{\ref{setup:1}}, let $A$ be an $\m$-adically
complete $\K$-algebra, and let $M$ be a finitely generated left $A$-module. 
If $\k \otimes_{\K} M = 0$  then $M = 0$.
\end{lem}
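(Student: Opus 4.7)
The hypothesis $\k \otimes_{\K} M = 0$ is equivalent to the equality $M = \m M$, since $\k \otimes_{\K} M \cong M / \m M$ canonically. The plan is to reduce to the classical determinant-trick proof of Nakayama's lemma, exploiting the fact that $\m$ acts on $A$ centrally through the structure map $\K \to \mrm{Z}(A)$.

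First I would pick a finite generating set $m_1, \ldots, m_k$ for $M$ as an $A$-module. From $M = \m M$ one obtains expressions of the form $m_j = \sum_i a_{ij} m_i$ with all $a_{ij} \in \m \subset \K$. The decisive point is that these coefficients can be chosen in $\m$ itself and hence are central in $A$, so the matrix $I - [a_{ij}] \in \mrm{M}_k(\K)$ makes sense as an operator on the column vector $(m_1, \ldots, m_k)^{\mrm{T}}$ with values in $M$, and the above relations precisely say that this operator kills the vector.

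Next, because the entries $a_{ij}$ commute (they all lie in $\K$), the determinant $\det(I - [a_{ij}])$ is a well-defined element of $\K$ of the form $1 - x$ with $x \in \m$, hence a unit because $\K$ is local. The matrix $I - [a_{ij}]$ is therefore invertible over $\K$, and applying its inverse to the vector relation forces $m_j = 0$ for all $j$, so $M = 0$. There is essentially no serious obstacle: once one notices that $\m$ lies in the center of $A$, the commutative Nakayama argument carries over verbatim. It is worth remarking that the $\m$-adic completeness of $A$ plays no role in this proof; only the locality of $\K$ is used, to ensure that $1 - x$ is a unit.
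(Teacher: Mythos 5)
The decisive step fails: from $M = \m M$ you cannot, in general, write $m_j = \sum_i a_{ij} m_i$ with $a_{ij} \in \m$. An element of $\m M$ is a finite sum $\sum_l x_l n_l$ with $x_l \in \m$ and $n_l \in M$; expanding each $n_l$ in terms of the $A$-module generators $m_i$ introduces coefficients from $A$, so all you actually obtain is $m_j = \sum_i c_{ij} m_i$ with $c_{ij} \in \m A$. The ideal $\m A$ is two-sided (because $\m$ lands in the center), but its elements are neither in $\m$ nor central, so the matrix $I - [c_{ij}]$ has no determinant over $\K$ and the commutative determinant trick does not apply. Coefficients in $\m$ itself would be available only if $M$ were finitely generated as a $\K$-module, which is not assumed.

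The gap is not cosmetic: your closing remark that the completeness of $A$ plays no role is false, and the lemma itself fails without that hypothesis. Take $\K = \k[[\hbar]]$ and $A = M = \k((\hbar))$, the Laurent series field. Then $M$ is a free $A$-module of rank one and $\k \otimes_{\K} M = M/\hbar M = 0$ because $\hbar$ is invertible in $A$, yet $M \neq 0$. (Consistently, $A$ is not $\m$-adically complete, since $\hbar^i A = A$ for all $i$.) The paper's proof uses completeness exactly at the point your argument skips: setting $\a := \m A$, the $\a$-adic completeness of $A$ forces $\a$ to lie in the Jacobson radical of $A$ (every $1 + a$ with $a \in \a$ is invertible via a convergent geometric series), and then the noncommutative Nakayama lemma for finitely generated modules applies to $M = \a M$.
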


\begin{proof}
Let $\a :=\m A$, which is a two-sided ideal of $A$, and
$\m^{i} A = \a^i$ for every $i$.  It follows that $A$ is $\a$-adically
complete. According to \cite[Section III.3.1, Lemma 3]{CA}
the ideal $\a$ is inside the Jacobson radical of $A$. By
the usual Nakayama Lemma (which holds also for noncommutative rings, cf.\
\cite[Section II.3.2, Proposition 4]{CA}) we see that 
$M / \a M = 0$ implies $M = 0$.
\end{proof}

Note that there is no commutativity or finiteness assumption on the algebra $A$;
only its structure as $\K$-module is important. 

The next proposition might be of interest.

\begin{prop}
Let $\K$ be as in Setup \tup{\ref{setup:1}}, let $A$ be an $\m$-adically
complete $\K$-algebra, and let $M$ be a perfect complex in
$\cat{D}(\cat{Mod} A)$. If 
$\k \otimes^{\mrm{L}}_{\K} M = 0$ then $M = 0$.
\end{prop}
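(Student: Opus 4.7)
The plan is to argue by contradiction, isolating the top non-vanishing cohomology of $M$ and showing it must vanish by combining the K\"unneth trick with Nakayama's lemma. Assume $M \neq 0$. Since $M$ is perfect over $A$, it is isomorphic to a bounded complex of finitely generated projective $A$-modules, so $\mrm{H}^i M = 0$ for all $i$ outside a bounded range. Set
\[ i_0 := \opn{sup} \{ i \mid \mrm{H}^i M \neq 0 \} \in \Z \]
and $N := \mrm{H}^{i_0} M$, which is nonzero by definition.

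The first step uses that $M$ is perfect: by Lemma \ref{lem:1}, the top cohomology $N$ is finitely presented, and in particular finitely generated, as an $A$-module. Next I would compute $\k \otimes_{\K} N$ by applying the K\"unneth trick (Lemma \ref{lem:2}) with the base $\K$-algebra taken to be $\K$ itself (which is permissible because $\K$ is commutative, so left and right $\K$-modules coincide). Taking the ``left'' complex to be $\k$ (concentrated in degree $0$, so its top degree is $0$) and the ``right'' complex to be $M$ viewed as a complex of $\K$-modules (with top degree $i_0$), the K\"unneth isomorphism gives
\[ \k \otimes_{\K} N \; \cong \; \mrm{H}^{i_0}(\k \otimes^{\mrm{L}}_{\K} M) \]
as $\K$-modules (both sides carry natural $A$-module structures and the isomorphism is $A$-linear, but $\K$-linearity alone suffices). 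By the hypothesis $\k \otimes^{\mrm{L}}_{\K} M = 0$, the right-hand side vanishes, hence $\k \otimes_{\K} N = 0$.

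Finally, since $N$ is a finitely generated left $A$-module over the $\m$-adically complete $\K$-algebra $A$, the Nakayama lemma in the form of Lemma \ref{lem:4} forces $N = 0$, contradicting the choice of $i_0$. Therefore $M = 0$.

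I expect no serious obstacle, as the three ingredients (perfection giving finite presentation, K\"unneth for the top cohomology, Nakayama) have all been set up in the preceding lemmas. The only subtlety is the bookkeeping in invoking Lemma \ref{lem:2} with base $\K$ rather than $A$, and noting that perfection is used only to guarantee both the existence of a finite $i_0$ and the finite generation of $\mrm{H}^{i_0} M$; flatness of $A$ over $\K$ is not needed because $\k \otimes^{\mrm{L}}_{\K} M$ is computed by a $\K$-projective resolution of $\k$.
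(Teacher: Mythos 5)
Your proposal is correct and follows essentially the same route as the paper: both isolate the top nonzero cohomology $\mrm{H}^{i_0}M$, use Lemma \ref{lem:1} to see it is finitely generated, the K\"unneth trick (Lemma \ref{lem:2}) to identify $\k \otimes_{\K} \mrm{H}^{i_0}M$ with $\mrm{H}^{i_0}(\k \otimes^{\mrm{L}}_{\K} M)$, and the Nakayama Lemma \ref{lem:4} to conclude. The only difference is presentational (you argue by contradiction where the paper proves the contrapositive directly).
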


\begin{proof}
Assume $M \neq 0$, and let $\mrm{H}^{i_0} M$ be its highest nonzero cohomology
module. By Lemmas \ref{lem:1} and \ref{lem:4} we see that
$\k \otimes_{\K} \mrm{H}^{i_0} M \neq 0$. On the other hand by the K\"{u}nneth
trick (Lemma \ref{lem:2}) we have
\[ \k \otimes_{\K} \mrm{H}^{i_0} M \cong
 \mrm{H}^{i_0} ( \k \otimes^{\mrm{L}}_{\K} M ) . \]
Hence $\k \otimes^{\mrm{L}}_{\K} M \neq 0$.
\end{proof}

Here is the main result of our paper:

\begin{thm} \label{thm:5}
Let $\K$ be as in Setup \tup{\ref{setup:1}}, and let $A$ and $B$ be a flat
$\m$-adically complete $\K$-algebras, such that the $\k$-algebras
$\bar{A} := \k \otimes_{\K} A$ and
$\bar{B} := \k \otimes_{\K} B$ are commutative with connected spectra. Suppose
$T$ is a two-sided tilting complex over
$B$-$A$ relative to $\K$. Then there is an isomorphism
\[ T \cong P[n] \]
in 
$\cat{D}^{\mrm{b}}(\cat{Mod} B \otimes_{\K} A^{\mrm{op}})$,
for some invertible $B$-$A$-bimodule $P$ and integer $n$.
\end{thm}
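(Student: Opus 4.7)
The plan is to reduce modulo $\m$ to the classical case handled by Theorem \ref{thm:2}, and then lift the structure back to $T$ using Nakayama-style arguments. First, by Corollary \ref{cor:1} applied with $\K' := \k$, the complex $\bar T := \k \otimes^{\mrm{L}}_\K T$ is a two-sided tilting complex over $\bar B$-$\bar A$ relative to $\k$, with inverse $\bar S := \k \otimes^{\mrm{L}}_\K S$ (where $S$ is the inverse of $T$). Since $\bar A$ is commutative with connected spectrum, Theorem \ref{thm:2} provides an integer $n$ and an invertible $\bar B$-$\bar A$-bimodule $\bar P$ with $\bar T \cong \bar P[n]$, and consequently $\bar S \cong \bar P^{-1}[-n]$.

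Next I locate the top cohomologies of $T$ and $S$. Assuming $A, B \neq 0$ (else the theorem is trivial), $T$ is nonzero and perfect over $B$ by \cite[Theorem 1.6]{Ye1}. Letting $i_0$ be the supremum of $\{i : \mrm{H}^i T \neq 0\}$, Lemma \ref{lem:1} ensures $\mrm{H}^{i_0} T$ is finitely generated over $B$, so Nakayama (Lemma \ref{lem:4}) forces $\k \otimes_\K \mrm{H}^{i_0} T \neq 0$; by the K\"unneth trick (Lemma \ref{lem:2}) this agrees with $\mrm{H}^{i_0} \bar T$, pinning $i_0 = -n$. Set $P := \mrm{H}^{-n} T$, and symmetrically $Q := \mrm{H}^n S$. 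Applying Lemma \ref{lem:2} once more to $T \otimes^{\mrm{L}}_A S \cong B$ identifies the top cohomology of the left side as $P \otimes_A Q$ in degree $0$, yielding $P \otimes_A Q \cong B$ as $B$-bimodules; symmetrically $Q \otimes_B P \cong A$. Thus $P$ is an invertible $B$-$A$-bimodule, and in particular is finitely generated projective as a left $B$-module. This step --- obtaining projectivity of $P$ --- is the step I expect to be the main obstacle, and is here circumvented by using the inverse $S$ to witness $P$ as half of an invertible bimodule pair, which bypasses any direct flatness-lifting argument.

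To conclude, I show that $T$ is concentrated in degree $-n$. Viewing $T$ in $\cat{D}(\cat{Mod} B)$ and using projectivity of $P$ over $B$, the projective truncation trick (Lemma \ref{lem:3}) gives a decomposition $T \cong P[n] \oplus N$ in $\cat{D}(\cat{Mod} B)$, where $N$ satisfies $N^i = 0$ for $i \geq -n$ and is perfect as a direct summand of the perfect complex $T$. Base-changing to $\k$ gives $\bar P[n] \oplus \bar N \cong \bar T \cong \bar P[n]$, forcing $\bar N \cong 0$. The proposition preceding Theorem \ref{thm:5}, applied to the perfect complex $N$ over the $\m$-adically complete algebra $B$, then yields $N \cong 0$. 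Therefore $\mrm{H}^i T = 0$ for all $i \neq -n$, so $T$ is quasi-isomorphic to its top cohomology placed in degree $-n$: that is, $T \cong P[n]$ in $\cat{D}^{\mrm{b}}(\cat{Mod} B \otimes_\K A^{\mrm{op}})$.
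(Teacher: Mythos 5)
Your proof is correct, and for most of its length it runs parallel to the paper's own argument: the reduction modulo $\m$ via Corollary \ref{cor:1} and Theorem \ref{thm:2}, the identification of $P=\mrm{H}^{-n}T$ and $Q=\mrm{H}^{n}S$ as top cohomologies via Lemma \ref{lem:1}, Nakayama (Lemma \ref{lem:4}) and the K\"unneth trick (Lemma \ref{lem:2}), the isomorphisms $P\otimes_A Q\cong B$ and $Q\otimes_B P\cong A$ giving invertibility of $P$, and the splitting $T\cong P[n]\oplus N$ by the projective truncation trick (the paper splits in $\cat{D}(\cat{Mod} A^{\mrm{op}})$ rather than over $B$, which is immaterial). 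Where you genuinely diverge is in killing the complement. The paper argues directly: if $\mrm{H}^{l}M$ is the top nonzero cohomology of the complement $M$, then $(\mrm{H}^{l}M)\otimes_A Q$ is, by K\"unneth, a direct summand of $\mrm{H}^{l+n}B=0$, and invertibility of $Q$ forces $\mrm{H}^{l}M=0$, a contradiction. You instead base-change the complement to $\k$, observe $\bar N\cong 0$ (this does rely on your remark that $N$ lives in degrees $<-n$, so that in degree $-n$ the identity $\bar P\oplus\mrm{H}^{-n}\bar N\cong\bar P$ poses no cancellation issue), and then invoke the Proposition preceding the theorem --- the ``perfect Nakayama'' --- which the paper states but never actually uses in this proof. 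Your route is conceptually cleaner (``vanishing is detected mod $\m$'') and gives that Proposition a genuine job, but it costs one extra input: you must know that a direct summand of a perfect complex is perfect (standard --- perfect complexes form a thick subcategory, being the compact objects --- but not established in the paper), whereas the paper's contradiction argument needs only the lemmas already on the table.
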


\begin{proof}
This is very similar to the proof of Theorem \ref{thm:2}. We may assume that
$A \neq 0$. Define
\[ n := - \opn{sup} \{ i \mid \mrm{H}^i T \neq 0 \} , \]
and let $P := \mrm{H}^{-n} T$. This is a $B$-$A$-bimodule. 
By Lemma \ref{lem:1}, $P$ is a nonzero finitely generated right $A$-module.
So according to Lemma \ref{lem:4} the right $\bar{A}$-module
$\bar{P} := \k \otimes_{\K} P$ is nonzero.
By the K\"{u}nneth trick (Lemma \ref{lem:2}) there is an isomorphism 
\[ \bar{P} = \k \otimes_{\K} \mrm{H}^{-n} T \cong
\mrm{H}^{-n} ( \k \otimes^{\mrm{L}}_{\K} T ) . \]

According to Corollary \ref{cor:1} the complex
$\bar{T} := \k \otimes^{\mrm{L}}_{\K} T$
is a two-sided tilting complex over
$\bar{B}$-$\bar{A}$ relative to $\k$.
Since $\bar{A}$ is commutative and $\opn{Spec} \bar{A}$ is connected, we can
apply Theorem \ref{thm:2}. The conclusion is that 
$\bar{T}$ has exactly one nonzero cohomology module. But by the calculation
above this must be 
$\mrm{H}^{-n} \bar{T} \cong \bar{P}$.
Therefore we get an isomorphism
$\bar{T} \cong \bar{P}[n]$ in 
$\cat{D}(\cat{Mod} \bar{B} \otimes_{\k} \bar{A}^{\mrm{op}})$,
and $\bar{P}$ is an invertible $\bar{B}$-$\bar{A}$ bimodule relative to $\k$.

Let 
$S \in \cat{D}^{\mrm{b}}(\cat{Mod} A \otimes_{\K} B^{\mrm{op}})$ 
be an inverse of $T$. 
Define
\[ m := - \opn{sup} \{ i \mid \mrm{H}^i S \neq 0 \} , \]
$Q := \mrm{H}^{-m} S$,
$\bar{S} := \k \otimes^{\mrm{L}}_{\K} S$
and
$\bar{Q} := \k \otimes_{\K} Q$. By the same considerations as above we see that
$\bar{S} \cong \bar{Q}[m]$ in 
$\cat{D}(\cat{Mod} \bar{A} \otimes_{\k} \bar{B}^{\mrm{op}})$,
and $\bar{Q}$ is an invertible $\bar{A}$-$\bar{B}$ bimodule relative to $\k$.

{} From Corollary \ref{cor:1} it follows that  
\[ \bar{P}[n] \otimes_{\bar{A}} \bar{Q}[m] \cong 
\bar{T} \otimes^{\mrm{L}}_{\bar{A}} \bar{S} \cong \bar{B} . \]
Therefore $n = -m$.
Using the K\"{u}nneth trick we see that
\[ B \cong \mrm{H}^0 (T \otimes^{\mrm{L}}_A S) \cong
(\mrm{H}^{-n} T) \otimes_A (\mrm{H}^{n} Q) = 
P \otimes_A Q . \]
Similarly we get
\[ A \cong Q \otimes_B P . \]
So $P$ is an invertible $B$-$A$-bimodule relative to $\K$.

Since $P$ is a projective $A^{\mrm{op}}$-module,
and it is the highest nonzero cohomology of $T$, 
by Lemma \ref{lem:1} we have an isomorphism
$T \cong M \oplus P[n]$
in $\cat{D}^{\mrm{b}}(\cat{Mod} A^{\mrm{op}})$ for some complex $M$. 
Suppose, for the sake of contradiction, that $M \neq 0$; and let
\[ l := \opn{sup} \{ i \mid \mrm{H}^i M \neq 0 \} . \]
Then $l < -n$, so $l + n < 0$.
By the K\"{u}nneth trick we get
\[ (\mrm{H}^{l} M) \otimes_A Q \cong 
(\mrm{H}^{l} M) \otimes_A (\mrm{H}^{n} S) \cong 
\mrm{H}^{l + n} (M \otimes^{\mrm{L}}_A S) , \]
which is a direct summand of the $B^{\mrm{op}}$-module
\[ \mrm{H}^{l + n} (T \otimes^{\mrm{L}}_A S) \cong
\mrm{H}^{l + n} B = 0 . \]
But $Q$ is an invertible bimodule, and therefore 
$\mrm{H}^{l} M = 0$. This is a contradiction.
Hence $T \cong P[n]$ in $\cat{D}^{\mrm{b}}(\cat{Mod} A^{\mrm{op}})$.

Finally, the last isomorphism implies that $\mrm{H}^i T = 0$ for all 
$i \neq -n$. By truncation we obtain the isomorphism 
$T \cong P[n]$ in 
$\cat{D}^{\mrm{b}}(\cat{Mod} B \otimes_{\K} A^{\mrm{op}})$.
\end{proof}

The upshot is that associative deformations behave like commutative algebras,
as far as derived Morita theory is concerned. Specifically:

\begin{cor} \label{cor:2}
Let $\K$ be as in Setup \tup{\ref{setup:1}}, and let $A$ and $B$ be a flat
$\m$-adically complete $\K$-algebras, such that the $\k$-algebras
$\bar{A} := \k \otimes_{\K} A$ and
$\bar{B} := \k \otimes_{\K} B$ are commutative with connected spectra. 
Assume that $A$ and $B$ are derived Morita equivalent relative to $\K$.
Then $A$ and $B$ are Morita equivalent relative to $\K$. Moreover the
$\k$-algebras $\bar{A}$ and $\bar{B}$ are isomorphic.
\end{cor}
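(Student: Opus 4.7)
The plan is to combine Rickard's theorem with Theorem~\ref{thm:5} to extract an invertible bimodule, and then reduce modulo $\m$ to obtain the algebra isomorphism.

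First, the hypothesis that $A$ and $B$ are derived Morita equivalent relative to $\K$ feeds into Theorem~\ref{thm:1}, producing a two-sided tilting complex $T$ over $B$-$A$ relative to $\K$. The hypotheses of Theorem~\ref{thm:5} are exactly in force (both $\bar{A}$ and $\bar{B}$ are commutative with connected spectra), so there is an isomorphism $T\cong P[n]$ in $\cat{D}^{\mrm{b}}(\cat{Mod} B\otimes_{\K} A^{\mrm{op}})$ for some invertible $B$-$A$-bimodule $P$ relative to $\K$ and some integer $n$. The functor $P\otimes_{A}-$ is then a $\K$-linear equivalence $\cat{Mod} A \to \cat{Mod} B$, which is exactly the assertion that $A$ and $B$ are Morita equivalent relative to $\K$.

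To obtain the isomorphism $\bar{A}\cong\bar{B}$, I would base change along $\K\to\k$. Because $P$ is a projective $A$-module and $A$ is flat over $\K$, the bimodule $P$ is flat over $\K$, so $\bar{P}:=\k\otimes_{\K} P$ coincides with $\k\otimes^{\mrm{L}}_{\K} P$. Applying Corollary~\ref{cor:1} to the tilting complex $P[0]$ shows that $\bar{P}$ is an invertible $\bar{B}$-$\bar{A}$-bimodule relative to $\k$. The standard fact that an invertible bimodule induces a $\k$-algebra isomorphism between the centers (as used in the proof of Theorem~\ref{thm:2}, citing \cite[Proposition 2.4]{Ye1}) then yields a canonical isomorphism $\bar{A}\cong\mrm{Z}(\bar{B})$. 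Since $\bar{B}$ is commutative by assumption, $\mrm{Z}(\bar{B})=\bar{B}$, and the desired $\bar{A}\cong\bar{B}$ follows.

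No step here is genuinely hard, since Theorem~\ref{thm:5} has already absorbed all the technical work. The only point one has to verify carefully is that the center-preservation property from \cite[Proposition 2.4]{Ye1} is still valid when the base ring $\k$ is not a field but merely a commutative ring; this is purely formal, because the statement only uses that an invertible bimodule is a left and right progenerator and therefore identifies endomorphism rings on both sides.
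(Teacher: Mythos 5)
Your proposal is correct and follows essentially the same route as the paper: Theorem \ref{thm:1} produces the tilting complex, Theorem \ref{thm:5} replaces it by a shifted invertible bimodule $P$, and reducing $P$ modulo $\m$ gives an invertible $\bar{B}$-$\bar{A}$-bimodule between commutative algebras, forcing $\bar{A} \cong \bar{B}$. Your closing worry about \cite[Proposition 2.4]{Ye1} over a non-field base is moot here, since in Setup \tup{\ref{setup:1}} the ring $\k = \K/\m$ is the residue field.
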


\begin{proof}
By Theorem \ref{thm:1} there is a two-sided tilting complex
$T$ over $B$-$A$-relative to $\K$. 
Therefore by Theorem \ref{thm:5} there is an invertible 
$B$-$A$-bimodule $P$ relative to $\K$. So we have classical Morita equivalence
between $A$ and $B$.

Now the bimodule $\bar{P} := \k \otimes_{\K} P$ is an invertible 
$\bar{B}$-$\bar{A}$-bimodule relative to $\k$. Since these are commutative
$\k$-algebras they must be isomorphic.
\end{proof}

\begin{cor} \label{cor:3}
Let $\K$ be as in Setup \tup{\ref{setup:1}}, and let $A$ be a flat
$\m$-adically complete $\K$-algebra, such that the $\k$-algebra
$\bar{A} := \k \otimes_{\K} A$ is commutative with connected spectrum. 
Then 
\[ \opn{DPic}_{\K}(A) = \opn{Pic}_{\K}(A) \times \mbb{Z} . \]
\end{cor}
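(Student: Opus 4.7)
The plan is to apply Theorem \ref{thm:5} in the special case $B = A$, and combine it with the canonical homomorphism $\opn{Pic}_{\K}(A) \times \Z \to \opn{DPic}_{\K}(A)$, $(P,n) \mapsto P[n]$, already noted earlier in the paper to be an injective group homomorphism.

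First I would observe that since $A$ and $B := A$ both satisfy the hypotheses of Theorem \ref{thm:5}, every two-sided tilting complex $T$ over $A$ relative to $\K$ is isomorphic in $\cat{D}^{\mrm{b}}(\cat{Mod}\, A \otimes_{\K} A^{\mrm{op}})$ to $P[n]$ for some invertible $A$-$A$-bimodule $P$ relative to $\K$ and some integer $n$. This is exactly the statement that the canonical homomorphism $\opn{Pic}_{\K}(A) \times \Z \to \opn{DPic}_{\K}(A)$ is surjective.

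Combined with the previously mentioned injectivity of this homomorphism, I conclude that the map is a group isomorphism, yielding the identification $\opn{DPic}_{\K}(A) = \opn{Pic}_{\K}(A) \times \Z$.

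The main obstacle has already been cleared by Theorem \ref{thm:5}: the only nontrivial content here is surjectivity, which reduces directly to the structure theorem for tilting complexes in the deformation setting. No further calculation is needed — the corollary is essentially a packaging of Theorem \ref{thm:5} in the language of derived Picard groups.
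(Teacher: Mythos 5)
Your proposal is correct and follows exactly the paper's own argument: the paper likewise invokes the injectivity of the canonical map $\opn{Pic}_{\K}(A) \times \mbb{Z} \to \opn{DPic}_{\K}(A)$ noted earlier and cites Theorem \ref{thm:5} (with $B = A$) for surjectivity. Nothing is missing.
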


\begin{proof}
As mentioned earlier, there is a canonical inclusion
of $\opn{Pic}_{\K}(A) \times \mbb{Z}$ into 
$\opn{DPic}_{\K}(A)$. By Theorem \ref{thm:5} this is a bijection.
\end{proof}

\begin{rem}
Let $\K$ be any commutative ring, and let $A$ be a flat noetherian $\K$-algebra.
A {\em dualizing complex} over $A$ relative to $\K$ is a complex
$R \in \cat{D}^{\mrm{b}}(\cat{Mod} A \otimes_{\K} A^{\mrm{op}})$ 
satisfying a list of conditions; see \cite[Definition 4.1]{Ye1}.
Presumably \cite[Theorem 4.5]{Ye1} holds in this case (it was only proved when
$\K$ is a field). Then the group $\opn{DPic}_{\K}(A)$ classifies isomorphism
classes of dualizing complexes (if at least one dualizing complex exists).

Now assume we are in the situation of Corollary \ref{cor:3}, and that $\bar{A}$
is a finitely generated $\k$-algebra. Then $A$ is noetherian. It is reasonable
to suppose that $A$ will have some dualizing complex $R$ relative to $\K$. What 
Corollary \ref{cor:3} tells us is that any other dualizing complex $R'$ must be
isomorphic to $P[n] \otimes_A R$ for some invertible bimodule $P$ and integer
$n$.
\end{rem}

\begin{rem}
In the paper \cite{BW} Bursztyn and Waldmann consider the local ring
$\K = \k[[\hbar]]$, and a fixed commutative $\k$-algebra $\bar{A}$
with connected spectrum. 
They prove that the Picard group
$\opn{Pic}_{\k}(\bar{A})$ acts on the set of gauge equivalence classes of 
associative $\K$-deformations $A$ of $\bar{A}$. The orbit of a deformation $A$
under this action is the set of deformations that Morita equivalent to $A$.
The stabilizer of $A$ in $\opn{Pic}_{\k}(\bar{A})$ is the image of 
$\opn{Pic}_{\K}(A)$.
And the kernel of the homomorphism
$\opn{Pic}_{\K}(A) \to \opn{Pic}_{\k}(\bar{A})$
is the group of outer gauge equivalences of $A$.

Presumably these results remain true for any complete ring $\K$ as in Setup
\ref{setup:1}, not just for $\K = \k[[\hbar]]$.
\end{rem}


\end{document}